\documentclass[10pt]{article}

\usepackage[english]{babel}
\usepackage[utf8]{inputenc}
\usepackage[T1]{fontenc}

\usepackage{amsmath, amsthm, amssymb, mathtools}

\usepackage[shortlabels]{enumitem}

\usepackage{cases}

\newcommand\EC{\textbf{(EC)}\xspace}
\newcommand\NC{\textbf{(NC)}\xspace}

\usepackage[
backend=biber,
firstinits=true,
style=alphabetic,
sorting=nyt,
isbn=false,
url=false,
doi=true,
maxbibnames=99
]{biblatex}
\AtBeginDocument{\def\MR#1{}}

\usepackage{hyperref}

\usepackage{sectsty}

\usepackage{tikz}
\usetikzlibrary{positioning}
\usepackage{caption}

\usepackage[a4paper]{geometry}

\providecommand{\keywords}[1]{\textbf{Keywords.} #1}
\providecommand{\MSC}[1]{\textbf{2020 Mathematics Subject Classification.} #1}

\newtheorem{theorem}{Theorem}[section]

\newtheorem{proposition}[theorem]{Proposition}
\newtheorem{example}[theorem]{Example}

\theoremstyle{definition}
\newtheorem{definition}[theorem]{Definition}
\newtheorem{remark}[theorem]{Remark}

\renewcommand\epsilon{\varepsilon}
\renewcommand\mapsto{\longmapsto}

\usepackage{color}

\usepackage{xspace}

\newcommand{\R}{\field{R}\xspace}

\newcommand{\field}[1]{\ensuremath{\mathbb{#1}}}

\newcommand{\ens}[1]{ \left\{#1\right\} }

\newcommand\diag{\mathrm{diag} \,}

\newcommand{\Tinf}{T_{\mathrm{inf}}}

\newcommand{\TI}{T_{\mathrm{inf}} ^{\mathrm{I}}}
\newcommand{\TII}{T_{\mathrm{inf}}^{\mathrm{II}}}

\newcommand{\yIm}{y^{\mathrm{I}}_-}
\newcommand{\yIIm}{y^{\mathrm{II}}_-}
\newcommand{\yIp}{y^{\mathrm{I}}_+}
\newcommand{\yIIp}{y^{\mathrm{II}}_+}

\newcommand{\uI}{u^{\mathrm{I}}}
\newcommand{\uII}{u^{\mathrm{II}}}

\newcommand{\LambdaIm}{\Lambda^{\mathrm{I}}_-}
\newcommand{\LambdaIIm}{\Lambda^{\mathrm{II}}_-}\newcommand{\LambdaIp}{\Lambda^{\mathrm{I}}_+}
\newcommand{\LambdaIIp}{\Lambda^{\mathrm{II}}_+}

\newcommand\TCN{T_{\scriptscriptstyle \mathrm{[CN]} }}

\newcommand\pt[1]{\frac{\partial #1}{\partial t}}
\newcommand\px[1]{\frac{\partial #1}{\partial x}}

\newcommand\pxi[1]{\frac{\partial #1}{\partial \xi}}

\newcommand\Tau{\mathcal{T}}

\newcommand{\rank}{\mathrm{rank} \,}

\newcommand\Id{\mathrm{Id}}

\newcommand\abs[1]{\left|#1\right|}
\newcommand\st{\quad \middle| \quad}

\newcommand\setCN{\mathcal{B}}

\newcommand\dds{\frac{d}{ds}}

\newcommand\Xm[2]{\ell_{#1}\left[#2\right]}

\newcommand\Qcan{Q^c}
\newcommand\Gcan{G^c}
\newcommand\gcan{g^c}

\newcommand\IC{\textbf{+IC}}

\title{Minimal control time of some 1D first-order linear hyperbolic systems with row rank deficient boundary coupling matrix}

\author{
Long Hu\thanks{School of Mathematics, Shandong University, Jinan, Shandong 250100, China.
E-mail: \texttt{hul@sdu.edu.cn}}
\and
Guillaume Olive\thanks{Faculty of Mathematics and Computer Science, Jagiellonian University, ul. {\L}ojasiewicza 6, 30-348 Krak\'{o}w, Poland.
E-mail: \texttt{math.golive@gmail.com} or \texttt{guillaume.olive@uj.edu.pl}}
\and
Zengzhi Xin\thanks{Mathematical Research Center, Shandong University, Jinan, Shandong 250100, China.
E-mail: \texttt{xinzengzhi@mail.sdu.edu.cn}}
}

\date{July 20, 2026}

\begin{document}

\maketitle

\begin{abstract}
The minimal control time of one-dimensional first-order linear hyperbolic systems is by now well-known when the boundary coupling matrix is full row rank.
The goal of this article is to characterize the minimal control time for a class of systems with row rank deficient boundary coupling matrices.
\end{abstract}

\keywords{Hyperbolic systems; Minimal control time.}

\vspace{0.2cm}
\MSC{35L40; 93B05}

\tableofcontents

\section{Introduction and main results}

\subsection{Problem description}

\paragraph{General framework.}

The system we are interested in is the following system of one-dimensional (1D) hyperbolic Partial Differential Equations (PDEs):
\begin{equation}\label{syst}
\begin{dcases}
\pt{y}(t,x)+\Lambda(x) \px{y}(t,x)=M(x) y(t,x), \\
y_-(t,1)=u(t), \quad y_+(t,0)=Qy_-(t,0), \\
y(0,x)=y^0(x),
\end{dcases}
\quad (t,x) \in R_T.
\end{equation}
The notations are as follows.
The domain is the rectangle $R_T=(0,T) \times (0,1)$, where $T>0$.
$t$ is the time variable and $x$ is the space variable.
Since we are interested in systems, the state $y:R_T \to \R^n$ is thus a vector-valued function ($n \geq 2$).

The $n \times n$ matrix $\Lambda$ will always be assumed diagonal $\Lambda =\diag(\lambda_1,\ldots,\lambda_n)$, with $m \geq 1$ negative and $p \geq 1$ positive entries ($m+p=n$), also called speeds, and satisfying
\begin{equation}\label{hyp speeds}
\lambda_1(x)<\ldots<\lambda_m(x)<0<\lambda_{m+1}(x)<\ldots<\lambda_{m+p}(x),
\end{equation}
for every $x$.
The speeds will be assumed to be Lipschitz continuous on $\R$, so that the characteristics are globally defined: for every $1 \leq k \leq n$ and $(t,x) \in \R^2$, there is a unique $C^1$ function $s \mapsto \chi_k(s;t,x)$ solution to the Ordinary Differential Equation (ODE)
$$
\begin{dcases}
\dds \chi_k(s;t,x)=\lambda_k(\chi_k(s;t,x)), \quad \forall s \in \R, \\
\chi_k(t;t,x)=x.
\end{dcases}
$$

The $n \times n$ matrix $M$ couples the equations of the system inside the domain and will be called the internal coupling matrix.
We will assume that $M \in L^{\infty}(0,1)^{n \times n}$.

The structure of $\Lambda$ induces a natural splitting of the state $y$ into components corresponding to negative and positive speeds:
$$
y=
\begin{pmatrix}
y_- \\
y_+ 
\end{pmatrix},
\quad
\begin{array}{l}
y_- \in \R^m,
\\
y_+ \in \R^p.
\end{array}
$$
This explains why, for system \eqref{syst} to be well-posed, we need to consider boundary conditions at $x=1$ for $y_-$ and at $x=0$ for $y_+$.
The particular type of (local) boundary conditions considered in \eqref{syst} is also motivated by applications (\cite{BC16,YK22}).
The function $u:(0,T) \to \R^m$ is called the control, it is a function that will be at our disposal.
Since it acts on the boundary of the domain, we speak of boundary controllability.
We have $m$ controls for $n$ equations so that we have less controls than equation.
Such systems are called underactuated.
The system is coupled on the other part of the boundary by the matrix $Q \in \R^{p \times m}$.
This matrix will be called the boundary coupling matrix.

We shall also refer to systems of the form \eqref{syst} as $m+p$ systems.

System \eqref{syst} is well-posed in several functional settings.
In this paper, we will work in $L^2$ for the state and the controls.
We can prove that, for every $y^0 \in L^2(0,1)^n$ and $u \in L^2(0,T)^m$, there exists a unique solution
$$
y \in C^0([0,T];L^2(0,1)^n) \cap C^0([0,1];L^2(0,T)^n).
$$
By solution we mean ``solution along the characteristics'' (sometimes also called ``broad solution'', see \cite[Chapter 3]{Bre00}).
We refer for instance to \cite[Appendix A]{CHOS21} for a proof of the above well-posedness result in such a setting (see also \cite[Appendix A]{BC16} when $u=0$ and \cite[Lemma 3.2]{CN19} in the $L^\infty$ setting).

\paragraph{Control problems.}

Since $y(t,\cdot) \in L^2(0,1)^n$ for all $t$, we can consider control problems in the state space $L^2(0,1)^n$:

\begin{definition}
Let $T>0$ be fixed.
We say that system \eqref{syst} is null controllable (below, \NC) in time $T$ if, for every $y^0 \in L^2(0,1)^n$, there exists $u \in L^2(0,T)^m$ such that the corresponding solution $y$ to system \eqref{syst} satisfies the final condition $y(T,\cdot)=0$.
\end{definition}

We also say that system \eqref{syst} is exactly controllable (below, \EC) in time $T$ if, for every $y^1, y^0 \in L^2(0,1)^n$, there exists $u \in L^2(0,T)^m$ such that the corresponding solution $y$ to system \eqref{syst} satisfies the final condition $y(T,\cdot)=y^1$.
Obviously, \EC always implies \NC.
In general, the converse implication does not hold.
Nevertheless, for our system \eqref{syst}, we have equivalence if $\rank Q=p$ (see e.g. \cite[Remark 1.3]{HO21-JMPA}).

It is not difficult to check that controllability in time $T_1$ implies controllability in any time $T_2 \geq T_1$.
It is therefore natural and fundamental to try to find the smallest possible control time, the so-called minimal control time.

\begin{definition}
We denote by $\Tinf \in [0,+\infty]$ the minimal time for the \NC of system \eqref{syst}:
$$
\Tinf=
\inf\ens{T>0 \st \text{System \eqref{syst} is \NC in time $T$}}.
$$
\end{definition}

We say ``minimal'' control time according to the current literature, despite this time is not always a minimal element of the set.
We keep this naming here, but we use the notation with the ``inf'' to avoid eventual confusions.

It is not difficult to see that, whether we consider \EC or \NC, the minimal control time for a single equation (the transport equation) with speed $\lambda_k$ is the solution $T_k$ to $\chi_k(T_k;0,1)=0$ if $\lambda_k<0$ and to $\chi_k(T_k;0,0)=1$ if $\lambda_k>0$.
In any case, it is explicitly given by
$$
T_k=\int_0^1 \frac{1}{\abs{\lambda_k(\xi)}} \, d\xi.
$$
We refer for instance to \cite[Section 3]{Rus78}, \cite[Section 3.1]{CVKB13} or \cite[Chapter 1]{Oli25-notes}.
These quantities will play a key role below.
Note that assumption \eqref{hyp speeds} implies the following order relation:
$$
T_1<\cdots<T_m \quad \text{ and } \quad
T_{m+1}>\cdots>T_{m+p}.
$$

\subsection{Literature}

Controllability of 1D first-order hyperbolic systems has been widely studied in the literature.
It dates back at least to the article \cite{Rus67} and the celebrated survey \cite{Rus78}.
Therein, the author showed that system \eqref{syst} is \NC in any time $T \geq T_{m+1}+T_m$.

However, as observed by the author himself, this control time is in general larger than the minimal control time.
This is already easy to see in the trivial case $M=Q=0$, for which $\max\ens{T_m,T_{m+1}}$ is clearly the minimal null control time (all the equations are decoupled).
The first interesting situation where the minimal control time was found is the case $M=0$ in \cite{Wec82}.
This problem was then revisited in \cite{HO22-JDE}.
Therein, the authors introduced the important notion of canonical form of boundary coupling matrices.
A boundary coupling matrix $Q \in \R^{p \times m}$ is in canonical form if it has at most one nonzero entry on each row and each column, and this entry is equal to $1$.
It can be shown that, for every $Q \in \R^{p \times m}$, there exists a unique $\Qcan \in \R^{p \times m}$ which is in canonical form and such that $LQU=\Qcan$ for some unit lower triangular matrix $L \in \R^{p \times p}$ and some invertible upper triangular matrix $U \in \R^{m \times m}$.
The existence part is just a consequence of the Gaussian elimination, the uniqueness part is less straightforward and a proof can be found in \cite[Section 2.6]{Oli25-notes}.
$\Qcan$ is called the canonical form of $Q$.
The authors in \cite{HO22-JDE} then showed that
$$M=0 \quad \Longrightarrow \quad \Tinf=\max_{1 \leq k \leq \rho} \ens{T_m, \quad T_{m+1}, \quad T_{m+r_k}+T_{c_k}},$$
where $(r_1,c_1), \ldots, (r_{\rho},c_{\rho})$ denote the positions of the nonzero entries of $\Qcan$, with $r_1<\cdots<r_{\rho}$, and $\rho=\rank Q$.

The minimal control time has now been found in several other situations.
A rather detailed presentation can be found in the recent survey \cite{HO26-CAMB-pre}.
Two results are of particular interest for the present article:

\begin{itemize}
\item
In \cite{CN19}, the authors introduced the class $\setCN$ of boundary coupling matrices $Q \in \R^{p \times m}$ that satisfy: the submatrix formed from the first $k$ rows and columns of $Q$ is invertible, for all $k \leq \min\ens{p,m-1}$.
When $m \leq p$ and $Q \in \setCN$, they proved in \cite[Theorem 1]{CN21} that system \eqref{syst} is \NC in any time $T>\TCN$, where
$$\TCN=\max_{1 \leq k \leq m} T_{m+k}+T_k.$$

\item
When there exists $\rho_0$ such that the first $\rho_0$ rows of $Q$ are linearly independent and $c_{\rho_0}=m$, it follows from \cite[Corollary 1.12]{HO22-JDE} that the minimal control time for the \NC of system \eqref{syst} does not depend on $M$ and is thus given by
$$
\Tinf=\max_{1 \leq k \leq \rho_0} T_{m+k}+T_{c_k}.
$$

\end{itemize}

The goal of this article is to generalize the above result of \cite{CN21} in various aspects.
First, we will just assume that $Q_1,\ldots,Q_{m-1}$ are linearly independent ($m \geq 2$).
Second, we will actually find the exact value of the minimal control time.
Before going on, we also need to be sure that our framework is not already covered by the above result of \cite{HO22-JDE}.
Thus, we have to assume in addition that $c_k<m$ for all $k \leq m-1$.
Equivalently, this means that
\begin{equation}\label{main hyp}
\text{The submatrix formed from the first $m-1$ rows and columns of $Q$ is invertible.}
\end{equation}
This will be our framework.
For instance, $Q$ can be one of the following matrices:
$$
\begin{pmatrix}
1 & 0 \\
0 & 0
\end{pmatrix}, \quad
\begin{pmatrix}
1 & 0 \\
0 & 0 \\
0 & 0
\end{pmatrix}, \quad
\begin{pmatrix}
1 & 0 \\
0 & 0 \\
0 & 1
\end{pmatrix}.
$$

\subsection{Main result and comments}

Our main result is the following:
\begin{theorem}\label{main thm}
Assume that $2 \leq m \leq p$ and $Q$ satisfies \eqref{main hyp}.
Then, the minimal time for the \NC of system \eqref{syst} is
$$\Tinf=\max\ens{\tau^1, \tau^2},$$
where
$$
\tau^1=\max_{1 \leq k \leq m-1} T_{m+k}+T_{c_k},
\quad
\tau^2=\max\ens{\max_{\substack{m \leq k \leq p \\ k<r_m}}  T_{m+k}^{I_{m+k}}+T_m, \quad T_{m+r_m}+T_m},
$$
for some intervals $I_{m+k} \subset (0,1)$ to be defined below, and where
$T_{m+k}^{I_{m+k}}=\int_{I_{m+k}} \frac{1}{\lambda_{m+k}(\xi)} \, d\xi$.

\end{theorem}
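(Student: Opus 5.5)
The proof has two halves: an upper bound, showing null controllability for every $T>\max\{\tau^1,\tau^2\}$, and a lower bound, showing failure of null controllability for every $T<\max\{\tau^1,\tau^2\}$.

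\textbf{Reductions.} Throughout I use the usual invariances of the minimal control time, which let me replace $Q$ by a convenient representative. Changes of unknowns $y_+\mapsto Ly_+$ with $L$ unit lower triangular, and $u\mapsto U^{-1}u$ with $U$ invertible upper triangular, can be absorbed into $M$ after a Volterra (backstepping) transformation. This does not change $\Tinf$, since the speed ordering \eqref{hyp speeds} is compatible with these triangular structures. Under \eqref{main hyp}, I reduce $Q$ to the following form:
\begin{itemize}
\item the upper-left $(m-1)\times(m-1)$ block is the identity, so $c_k=k$ for $k\leq m-1$;
\item the last column is zero except possibly a $1$ at row $r_m\geq m$;
\item the rows below $m-1$ are zero in their first $m-1$ entries.
\end{itemize}
I then apply the backstepping transformation of \cite{CN19,HO22-JDE} to bring $M$ to a canonical ``boundary'' form. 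This yields a target system in which the internal coupling is reduced to a strictly triangular integral term acting only on the traces $y_-(t,0)$. Controllability of the original system is then equivalent to that of the target system.

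\textbf{Upper bound.} For $T>\max\{\tau^1,\tau^2\}$ I construct the control explicitly along characteristics.
\begin{itemize}
\item The first $m-1$ controls $u_1,\dots,u_{m-1}$ follow the strategy of \cite{CN21}. Each pair $(y_k,y_{m+k})$ with $k\leq m-1$ is cleared by time $T_{m+k}+T_k$: one sends $u_k$ so that $y_k(\cdot,0)$ cancels the incoming data for $y_{m+k}$, and the couplings are handled by a fixed-point/Volterra argument. This gives the $\tau^1$ part.
\item The last control $u_m$ only reaches $y_+$ through row $r_m$. So $y_{m+r_m}$ is handled in time $T_{m+r_m}+T_m$.
\item The components $y_{m+k}$ with $m\leq k<r_m$ receive no direct boundary input at $x=0$. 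They are driven only by $M$, which couples them with $y_m$. I will define $I_{m+k}\subset(0,1)$ as the sub-interval of $x$ on which $y_{m+k}$ is still influenced by $y_m$ after the triangular reduction; concretely, the support where the kernel coupling to $y_m$ is nonzero. On the complement of $I_{m+k}$ the component $y_{m+k}$ is uncontrolled, but it is transported out, or killed through the structure, within the single-equation time. On $I_{m+k}$ one must wait $T^{I_{m+k}}_{m+k}$ after $y_m$ has been cleared, which takes time $T_m$.
\end{itemize}
Assembling these estimates, I solve the resulting Volterra system for $u$ in $L^2$.

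\textbf{Lower bound.} For $T<\max\{\tau^1,\tau^2\}$ I argue by duality: \NC is equivalent to an observability inequality for the adjoint system. I pick the index realizing the maximum and build adjoint solutions concentrated on a thin characteristic strip. For the $T_{m+k}+T_{c_k}$ terms, that is, the $\tau^1$ part and the $r_m$ term, the required argument is essentially \cite[Corollary 1.12]{HO22-JDE}. For the terms $T^{I_{m+k}}_{m+k}+T_m$, I construct initial data supported in $I_{m+k}$ for $y_{m+k}$ whose influence cannot reach the controls before time $T$. Alternatively, I show that the reachable states miss an explicit infinite-dimensional subspace, which gives a compactness-type contradiction.

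\textbf{Main obstacle.} I expect the hardest step to be defining $I_{m+k}$ correctly and proving sharpness in the rank-deficient direction. There the coupling through $M$, rather than through $Q$, determines the time. I would first try to characterize $I_{m+k}$ via the kernel of the backstepping transformation, specifically the maximal interval adjacent to $x=1$ on which the relevant entry is nontrivial. I would then check that the upper and lower bounds match on this interval.
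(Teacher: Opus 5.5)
Your proposal has two genuine gaps.

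\textbf{The reduction $c_k=k$ is not available.} The positions $(r_k,c_k)$ are invariants of $Q$ under $Q\mapsto LQU$, with $L$ unit lower triangular and $U$ invertible upper triangular; this is the uniqueness of $\Qcan$. Under \eqref{main hyp}, the upper-left block of $\Qcan$ is only a permutation matrix $J$, and $c_k=k$ for all $k\leq m-1$ holds exactly when $Q\in\setCN$. For example, take $m=3$ with upper-left block $\begin{pmatrix}0&1\\1&0\end{pmatrix}$. Then $\tau^1=\max\ens{T_4+T_2,\,T_5+T_1}=T_4+T_2$. This is strictly larger than $\max\ens{T_4+T_1,\,T_5+T_2}$, which is what your pairing of $y_k$ with $y_{m+k}$ produces. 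You must keep $J$ and pair $y_{m+k}$ with $y_{c_k}$, as the boundary condition $y_{m+k}(t,0)=y_{c_k}(t,0)$ dictates.

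\textbf{The key decoupling is missing.} After backstepping, $y_{2m},\ldots,y_{m+p}$ are in general coupled through $G_{+-}$ to $y_1(t,0),\ldots,y_{m-1}(t,0)$, not only to $y_m(t,0)$. So your claim that they are driven only through $y_m$ is false as stated. The paper's essential ingredient is Theorem \ref{new thm kern} (or the second Volterra transformation of \cite[Section 6]{HO22-JDE}). It chooses the kernel boundary conditions so that $\gcan_{m+j,c_\gamma}=0$ whenever $j\geq r_\gamma$. Under \eqref{main hyp} this kills the whole lower-left block, giving $\Gcan_{+-}=\begin{pmatrix}A&b\\0&d\end{pmatrix}$. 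With this triangular structure both bounds come from known results.
\begin{enumerate}
\item Subsystem I, $(\yIm,\yIp)$, is autonomous and is a $1+(p-m+1)$ system. Its minimal time $\TI$ is given exactly by Theorem \ref{thm 1+p}, with $I_{m+k}=(\Xm{1}{d_k},1)$; this is also what gives the intervals a precise meaning.
\item Subsystem II has invertible coupling $J$, so it is exactly controllable for $T>\TII$. Once $\uI$ is fixed, this absorbs the source $b(x)\yIm(t,0)$; the order is therefore I first, then II.
\item For the lower bound, \NC of the full system implies \NC of subsystem I. This gives $T\geq\TI$, including the hard $T^{I_{m+k}}_{m+k}+T_m$ terms, for free.
\item The bound $T\geq\TII$ follows by comparison with the system without internal coupling, \cite[Theorem 5.1 (i)]{HO22-JDE}. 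It does not follow from \cite[Corollary 1.12]{HO22-JDE}, whose hypothesis $c_{\rho_0}=m$ fails here by design.
\end{enumerate}

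Without this decoupling, three things break in your plan. Your characteristic construction faces two-way coupling between the two groups of controls, with no order in which to handle them. Your duality argument on $I_{m+k}$ would have to rule out that $u_1,\ldots,u_{m-1}$ help clear $y_{m+k}$, $k\geq m$, through those couplings. Finally, defining $I_{m+k}$ as ``the support of the coupling to $y_m$'' only makes sense after a specific normalization of that coupling, which you never supply.
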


In the definition of $\tau^2$, we use the convention that the undefined quantities are simply not taken into account. More precisely,
\begin{itemize}
\item
If $r_m$ does not exist (i.e. $\rank Q=m-1$), then $\tau^2=\max_{m \leq k \leq p}  T_{m+k}^{I_{m+k}}+T_m$.

\item
If $r_m=m$, then $\tau^2=T_{2m}+T_m$.
\end{itemize}
We shall also use these conventions below without explicitly referring to it for the sake of the presentation.

\begin{remark}
Any $Q \in \setCN$ satisfies the assumptions of the theorem.
Therefore, it generalizes \cite[Theorem 1]{CN21} recalled above in the state of the art.
In fact, this quantifies exactly the gap between $\TCN$ and $\Tinf$.
\end{remark}

Theorem \ref{main thm} gives a complete characterization of the \NC of system \eqref{syst} and a precise definition of the intervals $I_{m+k}$ will be given during the proof below.
In general, it can still be challenging to characterize these intervals explicitly in function of the parameters $\Lambda,M,Q$.
Let us present an example where everything can be explicitly computed:

\begin{example}\label{ex}
Let $m=p=2$ and $f \in L^{\infty}(0,1)$ be arbitrary.
Consider
$$
Q=
\begin{pmatrix}
1 & 0 \\
0 & 0 
\end{pmatrix},
\quad
M=
\left(\begin{array}{cc|cc}
0 & 0 & \times & \times \\
\times & 0 & \times & \times \\
\hline
\times & 0 & 0 & \times \\
\times & f & \times & 0 
\end{array}\right).
$$
Then, from Theorem \ref{main thm}, we have 
$$
\Tinf=\max\ens{T_3+T_1, \, T_4^{I_4}+T_2}.
$$
Moreover, we shall see in Section \ref{sect example} below that, thanks to the special structure of this system, the interval $I_4$ verifies the identity
$$T_4^{I_4}+T_2=T_4^{J_4}+T_2^{J_4},
\quad J_4=(\Xm{x_0}{f},1),$$
where $x_0 \in (0,1)$ is the unique solution to $T_4^{(0,x_0)}+T_2^{(0,x_0)}=T_4$ and $\Xm{x_0}{f}$ is the length of the largest interval of the form $(0, \times) \subset (0,x_0)$ where the function $f$ vanishes a.e.
\end{example}

\paragraph{Organization of the paper.}
The rest of this article is organized as follows.
Section \ref{sect prelim} is a preparatory section where we recall some results from \cite{HVDMK19}, \cite{HO21-JMPA}, \cite{HO21-COCV} and \cite{HO22-JDE}.
In Section \ref{sect: BC for K} we prove a new and important result on special solutions to the so-called kernel equations.
Our main result is then proved in Section \ref{sect proof main thm}.
Finally, the above example is detailed in Section \ref{sect example}.

\paragraph{Notations.}
All along this paper, for a matrix (or matrix-valued function) denoted by an uppercase letter $A$, we denote its $(i,j)$-th entry by the corresponding lowercase letter $a_{ij}$.

For brevity, we will often not explicitly write the initial condition at $t=0$ of our systems below and just write ``\IC'' instead.

\section{Background results}
\label{sect prelim}

In this part, we recall previous results of the literature that will be needed to understand the proof of our main result.

\subsection{Equivalent system by the backstepping method}

First of all, as in many other works about the controllability or stabilization of hyperbolic system \eqref{syst}, the starting point is the backstepping method for PDEs.
Popularized with the reference book \cite{KS08} (see also the survey \cite{VABAK26}) and then developed specifically for hyperbolic systems in \cite{CVKB13,DMVK13,HDM15,HDMVK16,HVDMK19}, this method consists in using a Volterra integral transformation of the second kind to obtain an equivalent system for which the controllability properties are simpler to discuss.
Notably, it was established in \cite{HVDMK19} that the controllability in time $T$ of system \eqref{syst} is equivalent to the controllability in the same time of the following systems:
\begin{equation}\label{syst G}
\begin{dcases}
\pt{y}(t,x)+\Lambda(x) \px{y}(t,x)=G(x) y_-(t,0), \\
y_-(t,1)=u(t), \quad y_+(t,0)=Qy_-(t,0), \\
\IC,
\end{dcases}
\quad (t,x) \in R_T,
\end{equation}
where:
\begin{itemize}
\item
$G:(0,1) \to \R^{n \times m}$ is given by
\begin{equation}\label{def G}
G(x)
=
K(x,0)P, \quad P=-\Lambda(0)\begin{pmatrix} \Id_m \\ Q \end{pmatrix}.
\end{equation}

\item
$K:\Tau \to \R^{n \times n}$, where $\Tau=\ens{(x,\xi) \st 0<\xi<x<1}$, is the kernel of the Volterra integral transformation mapping system \eqref{syst} to system \eqref{syst G} and it has to be a solution to the so-called kernel equations:
\begin{equation}\label{kern equ}
\begin{dcases}
\Lambda(x)\px{K}(x,\xi)
+\pxi{K}(x,\xi)\Lambda(\xi)
+K(x,\xi) (\Lambda'(\xi)+M^0(\xi))=0,
\\
\Lambda(x) K(x,x)-K(x,x)\Lambda(x) =M^0(x),
\end{dcases}
\quad (x,\xi) \in \Tau.
\end{equation}
We will discuss the existence of such a solution shortly.

\item
The new internal coupling matrix $M^0:(0,1) \to \R^{n \times n}$ appearing in the equations above is just a slight modification of $M$ to remove its diagonal entries.
It is necessary for the existence of a solution to the kernel equations, as is immediately apparent from the boundary condition at $(x,x)$.
It is explicitly given by
$$
M^0(x)=(D(x)M(x) +\Lambda(x)D'(x)) D(x)^{-1},
$$
where $D=\diag(d_1,\ldots,d_n)$ with $d_k(x)=e^{-\int_0^x \frac{m_{kk}(\xi)}{\lambda_k(\xi)} \,d\xi}$.
\end{itemize}

The advantage of system \eqref{syst G} over system \eqref{syst} is its simpler internal coupling structure, achieved through the term $G(x) y_-(t,0)$ instead of $M(x) y(t,x)$.
We refer for instance to \cite[Section 5.1]{HO26-CAMB-pre} for a more detailed discussion.

As result, from now on, we will only focus on the controllability properties of systems of the form \eqref{syst G}.
We shall however come back to the precise link between $G$ and $\Lambda,M,Q$ to treat the example in Section \ref{sect example} below.

Let us now comment the existence of a solution to the kernel equations \eqref{kern equ}, which is the main difficulty in the backstepping method.
First observe that the rows of $K$ in system \eqref{kern equ} are decoupled.
Thus, it is enough to fix a row $i$ and consider the corresponding equations for $\begin{pmatrix} k_{i1} & \ldots & k_{in} \end{pmatrix}$.
From \cite[Section VI]{HDMVK16} (see also \cite[Remark A.2]{HVDMK19} for space-varying speeds), we can extract the following result:

\begin{theorem}\label{thm kern}
Fix a row index $i \in \ens{1,\ldots,n}$.
Let $a_{\alpha j}, f_{ij}, \varphi_{ij}$, for $\alpha, j \in \ens{1,\ldots,n}$, be arbitrary functions of $L^{\infty}(0,1)$.
Then, there exists a unique solution $\begin{pmatrix} k_{i1} & \ldots & k_{in} \end{pmatrix} \in L^{\infty}(\Tau)^{1 \times n}$ to
$$
\begin{dcases}
\lambda_i(x)\px{k_{ij}}(x,\xi)
+\pxi{k_{ij}}(x,\xi)\lambda_j(\xi)
+\sum_{\alpha=1}^n k_{i\alpha}(x,\xi) a_{\alpha j}(\xi)=0,
\\
k_{ij}(x,x) =f_{ij}(x),\quad \text{ if } j \neq i,
\end{dcases}
\quad (x,\xi) \in \Tau,
$$
with the following additional boundary conditions:
\begin{itemize}
\item
For $i \leq m$:
$$
k_{ij}(1,\xi) =\varphi_{ij}(\xi) \quad \text{ if } j<i,
\quad
k_{ij}(x,0) =\varphi_{ij}(x) \quad  \text{ if } i \leq j \leq m.
$$

\item
For $i \geq m+1$:
$$
k_{ij}(x,0) =\varphi_{ij}(x) \quad \text{ if } m+1 \leq j \leq i, \quad
k_{ij}(1,\xi) =\varphi_{ij}(\xi) \quad  \text{ if } j>i.
$$
\end{itemize}

\end{theorem}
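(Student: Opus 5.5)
The statement is Theorem \ref{thm kern}: well-posedness of a decoupled row of the kernel equations with prescribed boundary data. The plan is the standard one. Rewrite the system as integral equations along characteristics and solve them by a Picard iteration, using a weighted $L^\infty$ norm to get a contraction.

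\textbf{Step 1: characteristic curves.} Fix the row $i$. For each $j$, the $j$-th equation is a transport equation in $\Tau$ with velocity field $(\lambda_i(x),\lambda_j(\xi))$. Along the curve $s\mapsto(\chi_i(s;\cdot),\chi_j(s;\cdot))$ it reads
$$\frac{d}{ds}k_{ij}=-\sum_\alpha k_{i\alpha}a_{\alpha j}.$$
Each characteristic through an interior point $(x,\xi)$ is traced backward (or forward) until it exits $\Tau$. It exits through exactly one of the three sides: the diagonal $\xi=x$, the side $x=1$, or the side $\xi=0$.

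\textbf{Step 2: the prescribed data match the exit sides.} The signs of $\lambda_i$ and $\lambda_j$, together with the ordering \eqref{hyp speeds}, decide which side is the inflow side for component $j$.
\begin{itemize}
\item For $j\neq i$, the ordering of speeds makes the diagonal non-characteristic. It is an inflow side, so $k_{ij}(x,x)=f_{ij}$ can be imposed there.
\item For $i\le m$ and $j<i$, the second inflow side is $x=1$.
\item For $i\le m$ and $i\le j\le m$, it is $\xi=0$. For $j=i$ the characteristics are parallel to the diagonal, so only $\xi=0$ is inflow.
\item For $j\ge m+1$ (speed of opposite sign), the characteristic reaches the diagonal or $x=1$ in the appropriate direction.
\item The case $i\ge m+1$ is symmetric.
\end{itemize}
I will check case by case that the listed conditions are exactly the inflow data. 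This gives, for each $j$, an exit time $s_j(x,\xi)$ and the integral equation
$$k_{ij}(x,\xi)=\text{data}_j\bigl(\text{exit point}\bigr)+\int \sum_\alpha k_{i\alpha}a_{\alpha j}\,ds,$$
where the integral runs along the characteristic segment between the exit point and $(x,\xi)$.

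\textbf{Step 3: contraction.} The right-hand side defines a map $\Phi$ on $L^\infty(\Tau)^{1\times n}$. The exit times $s_j$ are bounded uniformly, since the speeds are bounded away from $0$ on $[0,1]$. Hence $\Phi$ is Lipschitz with constant $C\|a\|_\infty$. To avoid any smallness assumption, I use a weighted norm $\|k\|_\mu=\sup e^{-\mu\theta(x,\xi)}|k|$. Here $\theta$ is a function that strictly decreases along all relevant characteristics as one moves toward the inflow boundary. A natural candidate is $\theta(x,\xi)=$ the travel time from the inflow boundary, or $\theta=\int_\xi^x$ combined with $1-x$ in a suitable linear combination. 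With such a $\theta$, the integral term gains a factor $C/\mu$. Taking $\mu$ large makes $\Phi$ a contraction, which gives existence and uniqueness of a broad solution in $L^\infty$.

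\textbf{Main obstacle.} The delicate point is Step 2 together with the choice of $\theta$ in Step 3. The difficulty is to ensure that no characteristic is tangent to the boundary, and that a single weight $\theta$ works simultaneously for all $n$ families of characteristics.
\begin{itemize}
\item If no single $\theta$ works, the fallback is a successive-approximation series. I will estimate the $N$-th iterate by $(C\,s)^N/N!$, where $s$ is the total travel time, and sum the series.
\item The diagonal corner at $(0,0)$ needs a separate argument. Characteristics near it have short length, which only helps the estimate. However, the $L^\infty$ bounds must be uniform up to the corner, so I will check this directly.
\end{itemize}
I will also verify that $\Phi$ preserves measurability. This follows from the Lipschitz dependence of $\chi_k$ on initial data and the Lipschitz dependence of $s_j$ on $(x,\xi)$, by transversality at the exit side.
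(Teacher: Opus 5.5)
\textbf{A genuine gap: the weight is never found.} Your route is integral equations along characteristics, closed by a weighted-norm contraction. That is the standard argument in the works the paper relies on: the paper gives no proof of its own and extracts the statement from \cite[Section VI]{HDMVK16} and \cite[Remark A.2]{HVDMK19}. The gap is that the one non-routine step, which you call the main obstacle, is left open, and your fallback does not get around it.

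Both the contraction and an $N!$-type bound need a \emph{single} function $\theta$ on $\Tau$ with the following property: it increases at a uniform rate $c>0$ along every $j$-characteristic segment running from its data point to $(x,\xi)$. This is what makes the coupled system Volterra-like.

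\begin{enumerate}
\item Propagating $\abs{\Delta^N_\alpha}\le C^N\theta^N/N!$ through $\Delta^{N+1}_j(x,\xi)=\int\sum_\alpha a_{\alpha j}\Delta^N_\alpha\,ds$ uses exactly this property. So the successive-approximation series is not an independent fallback.
\item The travel time cannot play this role because it depends on $j$. At a point of the $j$-segment, nothing relates $s_\alpha$ to the part of $s_j$ still to go. A uniform bound only gives $(CT)^N$, which is a smallness condition.
\item The directions genuinely conflict. For $i\le m$, the segments run with $x,\xi$ decreasing when $j<i$, but increasing when $j\ge i$. For $j=i$, the quantity $\phi_i(x)-\phi_i(\xi)$, with $\phi_i(x)=\int_0^x d\zeta/\abs{\lambda_i(\zeta)}$, is constant along characteristics. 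So neither $x$, nor $\xi$, nor a pure distance-to-the-diagonal weight works.
\end{enumerate}

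\textbf{A weight that works.} Take $\theta(x,\xi)=\phi_i(x)-(1-\epsilon)\phi_i(\xi)$ with $\epsilon>0$ small. This is your second candidate, with the $1-x$ term entering with a small negative coefficient and lengths measured in the $\phi_i$ scale. Along the $j$-characteristic,
\[
\frac{d\theta}{ds}=\mathrm{sgn}(\lambda_i)\Bigl(1-(1-\epsilon)\frac{\lambda_j(\xi)}{\lambda_i(\xi)}\Bigr).
\]

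Normalizing both variables by the same $\phi_i$ puts the ratio at the single point $\xi$, where \eqref{hyp speeds} applies. By continuity on $[0,1]$, there is $\delta>0$ such that $\lambda_j/\lambda_i\ge 1+\delta$ or $\lambda_j/\lambda_i\le 1-\delta$ whenever $j\ne i$ and $\lambda_j$ has the sign of $\lambda_i$. A weight such as $x-c\xi$ in the original variables would instead compare $\lambda_i(x)$ with $\lambda_j(\xi)$ at different points. Assumption \eqref{hyp speeds} does not control that when the speeds vary.

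Choose $\epsilon$ so that $(1-\epsilon)(1+\delta)>1$, and set $c=\min\{\epsilon,\delta,(1-\epsilon)(1+\delta)-1\}$. A case check then gives:
\begin{itemize}
\item[(a)] $d\theta/ds\ge c$ when the segment from the data point to $(x,\xi)$ is traversed forward in $s$. This happens for $j<i$ if $i\le m$, and for $j\le i$ if $i\ge m+1$.
\item[(b)] $d\theta/ds\le -c$ when the segment is traversed backward.
\end{itemize}
For $j=i$ the rate is exactly $\epsilon\,\mathrm{sgn}(\lambda_i)$, which is why the tilt $\epsilon>0$ is needed. Your weighted estimate then yields the contraction constant $C/(\mu c)$.

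\textbf{Minor points.} For $i\le m<j$, the characteristic always reaches the diagonal in the data direction; its other end, on $x=1$ or $\xi=0$, carries no data. The corner $(0,0)$ needs no separate treatment, since travel times are uniformly bounded.
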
 

\begin{remark}\label{rmk wp kern}
For the present work we will actually have to consider in Section \ref{sect: BC for K} below slightly more general boundary conditions for the case $m+1 \leq j \leq i$.
They are of the form
$$
k_{ij}(x,0) =\varphi_{ij}(x) + \sum_{\alpha=1}^n k_{i\alpha}(x,0) b_{\alpha j},
$$
for some scalars $b_{\alpha j} \in \R$.
Then, we can check that the kernel equations are still well-posed, provided that the matrix $\Id-B$ is invertible, where $B$ is the transpose of the matrix $(b_{\alpha j})_{m+1 \leq \alpha,j \leq i}$.
This condition gives that the above boundary condition can be rewritten into a boundary condition of a similar form but without the indices $\alpha=m+1, \ldots, i$.
This allows to use similar arguments to those in the references mentioned above to prove the well-posedness.
\end{remark}

\subsection{Minimal control time for full row rank boundary coupling matrices}
\label{sect rankQ=p}

We will now recall two important situations where the minimal control time for the \NC of systems of the form \eqref{syst G} is known.
The first one is when the boundary coupling matrix $Q$ is full row rank, i.e. $\rank Q=p$.
Note that, in that case, $\NC$ and $\EC$ are equivalent properties, just as they are for system \eqref{syst}.
We have the following result:

\begin{theorem}\label{thm HO21-JMPA syst G}
Assume that $\rank Q=p$.
Then, the minimal control time for the \EC of
system \eqref{syst G} does not depend on $G$ and is thus equal to
$$
\max \ens{
T_m, \quad \max_{1 \leq k \leq p} \, T_{m+k}+T_{c_k}
}.
$$
\end{theorem}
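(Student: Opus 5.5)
We prove Theorem \ref{thm HO21-JMPA syst G} in two stages. First we reduce to $Q$ in canonical form and show that $G$ plays no role. Then we compute the time for $G=0$ by following characteristics.

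\textbf{Reduction to canonical form.} Write $LQU=\Qcan$ with $L$ unit lower triangular and $U$ invertible upper triangular. The change of unknowns $\tilde y_+=Ly_+$, $\tilde y_-=U^{-1}y_-$ preserves the form of system \eqref{syst G}, but it mixes components with different speeds. So it cannot be done naively on the whole system. Instead we use the kernel equations (Theorem \ref{thm kern}, with the extra freedom of Remark \ref{rmk wp kern}) to construct a second backstepping transformation. It maps \eqref{syst G} with coupling $G$ to the same system with $G$ replaced by a \emph{canonical} $\Gcan$. Because $\rank Q=p$, every row of $\Qcan$ contains a $1$ at position $(k,c_k)$, for $k=1,\dots,p$. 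We would choose the free boundary data $\varphi_{ij}$ so that $\Gcan$ vanishes identically. The invertibility of the $p\times p$ submatrix of $Q$ on the columns $c_1,\dots,c_p$ is what lets us prescribe $k_{ij}(x,0)$ for the positive rows and cancel all of $G$.

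The difficulty is that $y_-(t,0)$ has $m$ components, while the available freedom sits only in the $\xi=0$ conditions. The columns of $y_-(t,0)$ that are not among the $c_k$ remain uncoupled to $y_+$. Their contribution in $G$ still has to be absorbed, either by a triangular Fredholm-type correction or by noting that those components are simply transported to $x=0$ and controlled in time $T_m$. We expect this step, showing that the $G$-term can be removed without increasing the control time, to be the main obstacle. We would try the Fredholm/compactness route first. The idea is that $G$ contributes a compact perturbation of the $G=0$ control map. Exact controllability for $G=0$ in time $T$ together with a unique continuation (Fattorini–Hautus type) test then gives exact controllability for every $G$ at every $T$ strictly larger than the $G=0$ time. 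The finite-dimensional unobservable spectrum is ruled out using the full-rank structure.

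\textbf{Computation for $G=0$.} The system decouples into transport equations. Each $y_-^j$ is controlled from $x=1$ and reaches $x=0$ after time $T_j\le T_m$. Each $y_+^k$ is fed at $x=0$ by $y_-^{c_k}(t,0)$ and then crosses the domain in time $T_{m+k}$. To reach an arbitrary target $y_+^k(T,\cdot)$, the values $y_-^{c_k}(t,0)$ must be freely prescribable for $t\in(T-T_{m+k},T)$. That is possible exactly when this interval lies after the time $T_{c_k}$ at which $y_-^{c_k}$ at $x=0$ is still dictated by the initial data. This gives sufficiency of $T\ge \max\{T_m,\max_k T_{m+k}+T_{c_k}\}$. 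Necessity follows from the same domain-of-dependence argument: for smaller $T$, some $y_+^k(T,\cdot)$ is determined on a subinterval by the initial data alone, so exact controllability fails. The same argument also covers $T_m$ by itself. Combining both stages gives the stated minimal time, independent of $G$.
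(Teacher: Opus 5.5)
Your plan is to make $G$ irrelevant by a compactness argument and then compute the $G=0$ time along characteristics. That is a workable route; the paper gives no self-contained proof and only refers to the method of \cite{HO21-JMPA}, or to the coincidence of the two bounds of \cite[Theorem 1.11]{HO22-JDE} when $\rank Q=p$. As written, however, the proposal has genuine gaps.

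\textbf{The first stage is wrong.} A Volterra transformation $y\mapsto y-\int_0^x K(x,\xi)y(\xi)\,d\xi$ leaves $y(t,0)$ unchanged, so it cannot replace $Q$ by $\Qcan$. It also cannot make $G$ vanish. In row $m+j$ the kernel equations (Theorem \ref{thm kern}) only leave you the $j$ traces $k_{m+j,m+\beta}(x,0)$, $\beta\le j$, while the traces of $K_{+-}$ are not free. These $j$ functions must handle the $m\ge p\ge j$ entries of $G_{m+j}$, which is why Theorem \ref{new thm kern} only produces zeros at and below the pivots of $\Qcan$. For instance, take $m=2$, $p=1$, $Q=\begin{pmatrix}1&0\end{pmatrix}$ and $M$ whose only nonzero entry is a constant $m_{32}\neq0$. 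Then $k_{31}\equiv0$, so $g_{31}=0$ forces $k_{33}\equiv0$. Consequently $g_{32}=-\lambda_2(0)k_{32}(\cdot,0)$, where $k_{32}$ is transported from $k_{32}(x,x)=m_{32}/(\lambda_3-\lambda_2)\neq0$, so $g_{32}$ does not vanish.

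This matters because your $G=0$ computation is only valid for $Q=\Qcan$. The claim ``$y_{m+k}$ is fed by $y_{c_k}(t,0)$'' and the necessity argument built on it both use that form. You need Proposition \ref{prop LCU decomp} there: for $G_{+-}=0$ it gives $\Gcan_{+-}=0$, and \NC and \EC coincide since $\rank Q=p$. Alternatively, you can run a direct triangular argument with $L$ and $U$.

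\textbf{The step you call the main obstacle is left as an expectation, and the reason you give is not the operative one.}
\begin{enumerate}
\item The control maps for $G$ and for $G=0$ differ by a map that factors through $h=y_-(\cdot,0)$. It sends $h$ to the state at time $T$ of the system with source $G(x)h(t)$ and zero data. That map is an integral operator with bounded kernel, hence compact.
\item So above the $G=0$ time the unreachable space $N_T$ is finite-dimensional and nonincreasing in $T$.
\item The usual shift argument makes $N_T$ invariant under the adjoint semigroup for suitable $T$ arbitrarily close to that time. A nonzero $N_T$ would then contain an eigenfunction $z$ of the adjoint with vanishing observation.
\item The rank of $Q$ plays no role in excluding this; it is only needed for \EC when $G=0$. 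The nonlocal term enters the adjoint only through its boundary condition at $x=0$. Hence $z$ solves the decoupled ODE $(\Lambda z)'=\mu z$ with $z_+(1)=0$ (domain) and $z_-(1)=0$ (observation), so $z=0$.
\end{enumerate}

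\textbf{The lower bound for general $G$ is missing.} Your necessity argument only treats $G=0$, so you have only shown that the minimal time for $G$ is at most the one for $G=0$. For the reverse inequality you must run the same compactness--uniqueness argument with $G$ and $0$ exchanged. Alternatively, invoke the lower bound given by the system without internal coupling (\cite[Theorem 5.1 (i)]{HO22-JDE}, as in the proof of Theorem \ref{main thm}).
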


This result can be proved with the method developed in \cite{HO21-JMPA}.
Alternatively, we can also use the general bounds obtained in \cite{HO22-JDE} for such systems (see Theorem 1.11 therein and its proof on pages 678-679 and 695) and notice that they coincide when $\rank Q=p$.

\subsection{Minimal control time for $1+p$ systems}
\label{sect 1+p syst}

From the work \cite{HO21-COCV}, another situation where the minimal control time of system \eqref{syst G} is known is the case $m=1$.
In that case, $Q \in \R^p$ is just a vector and $r_1$ simply denotes the position of the first nonzero component of $Q$ (when it exists, i.e. $Q \neq 0$).
To recall this result, we need to introduce the following quantity:

\begin{definition}\label{def xf}
For $x_0 \in (0,1]$ and a function $f:(0,x_0) \longrightarrow \R$, we denote by $\Xm{x_0}{f} \in [0,x_0]$ the length of the largest interval of the form $(0,\ell)$ with $\ell \leq x_0$ where the function $f$ vanishes a.e.
\end{definition}

We can now recall \cite[Theorem 5.1]{HO21-COCV}:

\begin{theorem}\label{thm 1+p}
Assume that $m=1$ and denote by $g_1,\ldots,g_p$ the components of $G_{+-}$.
Then, the minimal control time for the \NC of
system \eqref{syst G} is
$$
\max\ens{T_2, \quad \max_{\substack{1 \leq k \leq p \\ k<r_1}}  T_{1+k}^{I_{1+k}}+T_1, \quad T_{1+r_1}+T_1},
$$
where $I_{1+k}$ is the interval $(\Xm{1}{g_k},1)$.
\end{theorem}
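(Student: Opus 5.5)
Since $m=1$, the statement is \cite[Theorem 5.1]{HO21-COCV}. The plan is to reduce null controllability of \eqref{syst G} to the solvability of a system of Volterra equations for the single scalar boundary trace $w(t)=y_1(t,0)$, and then to handle upper and lower bounds separately. Assume first that the kernel has been chosen so that $G_{--}=0$; the general case is a lower-order perturbation, handled by a Volterra transformation on the $y_1$-equation. Then $y_1$ is a pure transport with control at $x=1$. Hence $w$ on $(0,T_1)$ is fixed by $y^0$, while $w$ on $(T_1,T)$ can be prescribed arbitrarily in $L^2$ through $u$. The condition $y_1(T,\cdot)=0$ simply forces $u=0$ on $(T-T_1,T)$, which is equivalent to $w=0$ on $(T-T_1,T)$.

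For each component $y_{1+k}$ I would integrate along the characteristic $\chi_{1+k}(\cdot;T,x)$. If $T>T_2$ (the largest positive-speed time), every such characteristic enters through $x=0$ at a time $\theta_k(x)>0$, with $\theta_k$ decreasing in $x$. The final condition $y_{1+k}(T,x)=0$ then becomes
\[
q_k\, w(\theta_k(x))+\int_{\theta_k(x)}^{T} g_k\big(\chi_{1+k}(s;T,x)\big)\, w(s)\, ds=0,\quad x\in(0,1).
\]
After the change of variables $\theta=\theta_k(x)$, this is an equation on $(T-T_{1+k},T)$. It is of the second kind when $q_k\neq 0$ and of the first kind when $q_k=0$. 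The unknown $w$ is free only on $(T_1,T-T_1)$ and is given data elsewhere.

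For sufficiency, I would first treat $k\geq r_1$ after an upper triangular change of unknowns that makes $q_{r_1}=1$ the pivot, in the spirit of the canonical form. The row $r_1$ gives a second-kind Volterra equation, solvable as soon as the whole interval $(T-T_{1+r_1},T-T_1)$ where $w$ must be tuned lies in the free window, which is exactly $T>T_{1+r_1}+T_1$. The remaining rows $k>r_1$ are then absorbed using the freedom still available, since their transport times are shorter.

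For $k<r_1$ there is no boundary term. Since $g_k=0$ on $(0,\ell_k)$ with $\ell_k=\Xm{1}{g_k}$, the integrand only involves the part of the characteristic lying in $I_{1+k}$. It is then natural to rewrite the condition as a first-kind Volterra equation whose kernel is nondegenerate at its edge. Using Titchmarsh-type arguments, as in the duality approach, I would show that these rows impose no constraint once $T>T^{I_{1+k}}_{1+k}+T_1$: the free part of $w$ reaches every characteristic through $I_{1+k}$ in time. Sufficiency then follows by proving the dual observability inequality via compactness--uniqueness. Compactness comes from the smoothing of the integral terms, and uniqueness comes from the Titchmarsh convolution theorem.

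For necessity, $T\geq T_2$ is clear. For the other two thresholds I would construct explicit initial data $y^0_{1+k}$ supported near $x=1$. Suppose $T<T^{I_{1+k}}_{1+k}+T_1$ with $k<r_1$. Then the backward characteristic from $(T,x)$, for $x$ close to $1$, reaches $x=\ell_k$ only at times $<T_1$. On the segment where $g_k$ could act, $w$ is therefore entirely determined by $y^0$, and $g_k=0$, $q_k=0$ before that. So choosing $y^0_1=0$ and $y^0_{1+k}\neq 0$ in an appropriate region leaves $y_{1+k}(T,\cdot)\neq 0$ regardless of $u$. The threshold $T_{1+r_1}+T_1$ is obtained in the same way using the second-kind structure.

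The main obstacle will be the rows $k<r_1$. The first-kind Volterra equations must be solved simultaneously with the second-kind ones, and showing that the sharp threshold is exactly $T^{I_{1+k}}_{1+k}+T_1$, with the interval $I_{1+k}=(\Xm{1}{g_k},1)$, requires the delicate unique continuation step given by Titchmarsh's theorem.
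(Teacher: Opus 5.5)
The paper gives no proof of this statement; it only recalls \cite[Theorem 5.1]{HO21-COCV}, so your opening sentence is exactly what the paper does. The sketch you build on top of it, however, has two steps that fail.

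First, the free window is misidentified. With $G_{--}=0$, $y_1$ is transported from $x=1$ to $x=0$ in time $T_1$, so $w(s)=u(s-T_1)$ for $s\in(T_1,T)$. The condition $y_1(T,\cdot)=0$ means $u=0$ on $(T-T_1,T)$, and this only affects $w$ on $(T,T+T_1)$. Hence $w$ is free on all of $(T_1,T)$ and is \emph{not} forced to vanish on $(T-T_1,T)$. Taken literally, your framework would force the data $w|_{(0,T_1)}$ to vanish on $(T-T_1,T_1)$ whenever $T<2T_1$. This already contradicts the statement for $Q=0$, $G_{+-}=0$, $T_2<T_1$, where the minimal time is $T_1$.

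Second, the necessity construction cannot work. If $y_1^0=0$, the control $u=0$ gives $w\equiv0$. For $T>T_2$, every backward $(1+k)$-characteristic from time $T$ exits through $x=0$, so $y_+(T,\cdot)$ does not depend on $y_+^0$ at all, and this datum is steered to zero. Your claim that $w$ is determined by $y^0$ on the whole stretch where $g_k$ acts is also false, since that stretch contains $(T_1,T)$, where $w$ is free.

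What is missing is the actual lower-bound mechanism. The obstruction must come from $y_1^0$, through $w|_{(0,T_1)}$, which ranges over all of $L^2(0,T_1)$. One has to prove that $y_+(T,\cdot)=0$ forces $w$ to vanish on a window ending at $T$. For $k=r_1$, the homogeneous second-kind Volterra equation gives $w=0$ on $(T-T_{1+r_1},T)$. For $k<r_1$, put $\phi_k(x)=\int_0^x d\xi/\lambda_{1+k}(\xi)$. Since $\phi_k(\chi_{1+k}(s;T,x))=\phi_k(x)-(T-s)$, the condition becomes the convolution equation $\int_0^\sigma (g_k\circ\phi_k^{-1})(\sigma-r)\,w(T-r)\,dr=0$ for $\sigma\in(0,T_{1+k})$. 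The Titchmarsh convolution theorem then yields $w(T-\cdot)=0$ on $(0,T_{1+k}-\phi_k(\Xm{1}{g_k}))=(0,T^{I_{1+k}}_{1+k})$. Choosing $y_1^0$ so that $w$ does not vanish on the part of these windows lying in $(0,T_1)$ gives both lower bounds.

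Conversely, sufficiency needs neither duality nor compactness--uniqueness. Once $T$ reaches the claimed value, $u\equiv0$ gives $w=0$ on $(T_1,T)$. This interval contains $(T-T_{1+r_1},T)$, which covers rows $k\geq r_1$ since $(T-T_{1+k},T)\subset(T-T_{1+r_1},T)$. It also contains every $(T-T^{I_{1+k}}_{1+k},T)$ with $k<r_1$, outside of which those rows do not see $w$ because $q_k=0$. So Titchmarsh's theorem belongs in the necessity part, where the real difficulty lies, not in a uniqueness step for sufficiency.
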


\subsection{Removal of part of the internal coupling}

Let us now recall several general simplifications obtained in \cite{HO22-JDE} regarding the coupling structure of system \eqref{syst G}.
To this end, we introduce the block decomposition
$$
G=
\begin{pmatrix}
G_{--} \\
G_{+-}
\end{pmatrix},
\quad \begin{array}{l}
G_{--} \in \R^{m \times m}, \\ G_{+-} \in \R^{p \times m}.
\end{array}
$$

Then, \cite[Theorem 7.1]{HO22-JDE} states

\begin{theorem}
The minimal control time for the \NC of system \eqref{syst G} does not depend on $G_{--}$.
\end{theorem}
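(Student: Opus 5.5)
The plan is to reduce null controllability of \eqref{syst G} to a problem about the boundary trace $v(t)=y_-(t,0)$. Then we show that the set of traces realizable by some control depends on $G_{--}$ only through an invertible causal Volterra operator, which leaves the rest of the problem unchanged. The starting observation is that the coupling in \eqref{syst G} is entirely through $y_-(t,0)$. Once $v \in L^2(0,T)^m$ is known, the $y_+$ components solve decoupled transport equations with boundary data $Qv(t)$ at $x=0$ and source $G_{+-}(x)v(t)$. The $y_-$ components solve decoupled leftward transport equations with data $u$ at $x=1$ and source $G_{--}(x)v(t)$. Hence $y_+(T,\cdot)$ is an affine function of $(y^0_+,v)$ that does not involve $G_{--}$.

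Next, I will integrate the $i$-th equation ($i\le m$) along the characteristic $\chi_i$ that arrives at $(t,0)$. For $t<T_i$ this characteristic comes from the initial line, which gives
$$v_i(t)=a_i(t)+\int_{s_i(t)}^{t} g_{ii}^{--}\text{-type terms}\,(\chi_i(s;t,0))\,v(s)\,ds,$$
where $a_i$ depends only on $y^0_i$. The integral is in fact $\sum_j\int g_{ij}(\chi_i(s;t,0))v_j(s)\,ds$ over $s\in(0,t)$. For $t>T_i$ the characteristic comes from $x=1$, so $v_i(t)$ equals $u_i(t-T_i)$ plus a similar causal integral. Since $u_i$ is free, $v_i$ is then free on $(T_i,T)$. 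So the set of admissible traces is $\mathcal{V}_{G_{--}}=\ens{v \st (\Id-\mathcal{K}_{G_{--}})v=a \text{ on the windows } t<T_i}$. Here $\mathcal{K}_{G_{--}}$ is a causal Volterra integral operator with $L^\infty$ kernel, so $\Id-\mathcal{K}_{G_{--}}$ is an isomorphism of $L^2(0,T)^m$. I will use it to build a bijection $v\mapsto \tilde v$ between $\mathcal{V}_{G_{--}}$ and $\mathcal{V}_0$. On each window $t<T_i$ set $\tilde v_i=a_i$, and off the windows set $\tilde v_i=v_i$. The delicate point is that $\mathcal{K}$ mixes components with different windows.

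The main obstacle is that null controllability requires not just a bijection between admissible traces, but that the final condition $y_+(T,\cdot)=0$ be solvable in one class if and only if it is solvable in the other. My first attempt will be the following. The final state $y_+(T,\cdot)$ only sees $v$ on $(0,T)$ through fixed kernels, and the free parts of $v$ lie on the time intervals $(T_i,T)$. I will therefore parametrize $v=(\Id-\mathcal{K}_{G_{--}})^{-1}w$, with $w$ equal to $a$ on the windows and free elsewhere. The question then becomes whether the range of the map $w_{\mathrm{free}}\mapsto y_+(T)$ changes when $G_{--}$ is changed. I expect to handle this with a Fredholm-type transformation $\tilde y_+ = y_+ + \int_0^1 F(x,\xi)y_-(t,\xi)\,d\xi$, which absorbs the extra term. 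If this fails, I will fall back on a duality argument. That is, I will compare the observability inequalities of the adjoint systems, where $G_{--}$ appears as a compact perturbation of the observed trace. A unique continuation argument then shows that the inequality is unaffected.

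Finally, the condition $y_-(T,\cdot)=0$ is handled separately. Along each characteristic reaching $(T,x)$ from $x=1$, the value $u_i$ on $(T-\text{travel time},T)$ can be chosen to cancel the known source integral. This choice does not affect $v$ on the relevant times, since the trace at $x=0$ after time $T$ is irrelevant. Along characteristics coming from $t=0$, that is, when $T<T_i$, both systems impose the same kind of constraint. That constraint is moved onto $v$ through the same Volterra isomorphism. Combining these steps gives that \eqref{syst G} is \NC in time $T$ for $G_{--}$ if and only if it is \NC for $G_{--}=0$.
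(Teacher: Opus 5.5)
The paper gives no proof of this statement; it imports it from \cite[Theorem 7.1]{HO22-JDE}. Your sketch does not supply one either, because the decisive step is only announced, so there is a genuine gap.

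\textbf{What is sound.} Your reduction is correct.
For $T\ge T_m$, the controls on $(T-T_i,T)$ take care of $y_-(T,\cdot)=0$ without affecting $v$ on $(0,T)$.
Next, $y_+(T,\cdot)=S_+y^0_++\mathcal{L}v$, with $S_+$ and $\mathcal{L}$ independent of $G_{--}$.
Finally, $\mathcal{V}_{G_{--}}(y^0_-)=\{v:\ P(\Id-\mathcal{K}_{G_{--}})v=a\}$, where $(Pv)_i=v_i$ on $(0,T_i)$ and $0$ elsewhere.

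\textbf{Why a bijection of traces is not enough.} What must be proved is that $\mathcal{V}_{G_{--}}(y^0_-)$ meets $\{\mathcal{L}v=-S_+y^0_+\}$ for every $y^0$ if and only if $\mathcal{V}_0(y^0_-)$ does. A bijection $\mathcal{V}_{G_{--}}\to\mathcal{V}_0$ is useless for this unless it preserves $\mathcal{L}v$. Neither your window-replacement map nor $v\mapsto(\Id-\mathcal{K}_{G_{--}})v$ (which keeps the same $u$) does.

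\textbf{Where the difficulty actually lies.} Put $E=\{v:\ v_i=0 \text{ on } (0,T_i)\}$.
\begin{enumerate}
\item Suppose $P\mathcal{K}_{G_{--}}E=0$, which holds for instance when $g_{ij}=0$ for all $j<i$. Then the constraint involves only $Pv$, and $\mathcal{V}_{G_{--}}(y^0_-)=\Psi a+E$ with $\Psi=(\Id-P\mathcal{K}_{G_{--}})^{-1}$ a Volterra isomorphism of $PL^2(0,T)^m$. Since $y^0_-\mapsto a$ is onto $PL^2(0,T)^m$, the two families of affine sets coincide, and your argument closes for every $T$.
\item The real content of the theorem is therefore the strictly lower triangular part of $G_{--}$. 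It makes the constrained values of $v_i$ on $(T_j,T_i)$ depend on the free values of $v_j$.
\item In that case, assume the system with $G_{--}=0$ is \NC and pick a bounded $\Gamma:PL^2(0,T)^m\to E$ with $\mathcal{L}\Gamma=\mathcal{L}$. It then suffices to solve $(\Id+\mathcal{C})e-e_0\in\ker\mathcal{L}$ in $E$, where $\mathcal{C}=\Gamma\Psi P\mathcal{K}_{G_{--}}$.
\item $\mathcal{C}$ is compact but not of Volterra type, because a selection $\Gamma$ of null controls is in general not causal. So solvability is a genuine Fredholm and unique continuation question. This is exactly the ``mixing'' you flag as delicate and then leave open.
\end{enumerate}

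\textbf{Your proposed tools do not close the gap.}
\begin{enumerate}
\item The transformation $\tilde y_+=y_++\int_0^1F(x,\xi)y_-(t,\xi)\,d\xi$ leaves $y_-$ unchanged, hence also the map $u\mapsto v$ and the sets $\mathcal{V}_{G_{--}}(y^0_-)$. The transformed system still carries $G_{--}$ in its $y_-$ equations, so it cannot be the system with $G_{--}=0$.
\item The duality fallback is not a routine compactness--uniqueness argument. For \NC the observability inequality controls only $\|S(T)^*\varphi\|$ ($S$ the free semigroup), not $\|\varphi\|$. A sequence contradicting it need not be bounded, so compactness of the perturbation cannot be used as in the exact controllability case. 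The method has to be adapted. You would also still need to prove a unique continuation property for the perturbed adjoint system, which is the nontrivial step and which you do not address.
\item Your concluding claim, equivalence of \NC at each fixed $T$, is stronger than the theorem, which concerns only $\Tinf$. Outside the triangular case above, nothing in your sketch supports it.
\end{enumerate}
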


Consequently, we only have to focus on systems of the form \eqref{syst G} with $G_{--}=0$.
In the sequel, such systems will be denoted by
$$(Q,G_{+-}).$$

\subsection{Reduction to canonical boundary coupling matrices}

It has also been shown in \cite{HO22-JDE} that $Q$ can be changed into its canonical form, up to changing $G_{+-}$ as well.
More precisely, we have

\begin{proposition}\label{prop LCU decomp}
Let $(L,U)$ be a pair giving $LQU=\Qcan$.
Let $T>0$.
System $(Q,G_{+-})$ is \NC in time $T$ if, and only if, so is system $(\Qcan,\Gcan_{+-})$, where $\Gcan_{+-}(x)$ is the matrix whose $j$-th row is
\begin{equation}\label{def Gcan}
\Gcan_{m+j}(x)=\sum_{k \leq j} l_{jk}G_{m+k}(\zeta_{m+j,m+k}(x))U,
\end{equation}
where $\zeta_{m+j,m+k}$ is the solution $\zeta$ to the ODE
$$
\begin{dcases}
\zeta'(s)=\frac{\lambda_{m+k}(\zeta(s))}{\lambda_{m+j}(s)}, \quad s \in \R, \\
\zeta(0)=0.
\end{dcases}
$$
\end{proposition}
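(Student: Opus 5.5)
The plan is to construct an explicit transformation of the state that maps solutions of system $(Q,G_{+-})$ to solutions of system $(\Qcan,\Gcan_{+-})$. It is built from $U$ acting on the negative components and $L$ acting, together with a spatial reparametrization, on the positive components. I would then check that the inverse transformation, built from $L^{-1}$ (which is again unit lower triangular), goes back. Throughout, $G_{--}=0$.

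\emph{Step 1 (the transformation).} Given a solution $y$ of $(Q,G_{+-})$ with control $u$, set $z_-(t,x)=U^{-1}y_-(t,x)$ and
$$
z_{m+j}(t,x)=\sum_{k\leq j} l_{jk}\, y_{m+k}(t,\zeta_{m+j,m+k}(x)), \quad 1\leq j\leq p.
$$
Since $U$ is constant and $G_{--}=0$, $z_-$ solves the same free transport equations with boundary control $U^{-1}u$. For the positive components, the chain rule and the ODE defining $\zeta=\zeta_{m+j,m+k}$ give
$$
\left(\pt{}+\lambda_{m+j}(x)\px{}\right)\bigl[y_{m+k}(t,\zeta(x))\bigr]=\left(\pt{y_{m+k}}+\lambda_{m+k}(\zeta(x))\px{y_{m+k}}\right)(t,\zeta(x))=G_{m+k}(\zeta(x))\,y_-(t,0).
$$
Writing $y_-(t,0)=Uz_-(t,0)$ yields exactly the source term $\Gcan_{m+j}(x)z_-(t,0)$ from \eqref{def Gcan}.

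For the boundary condition, $\zeta(0)=0$ gives $z_+(t,0)=Ly_+(t,0)=LQy_-(t,0)=LQUz_-(t,0)=\Qcan z_-(t,0)$. The map is well defined on $(0,1)$ because $k\leq j$ implies $\lambda_{m+k}\leq\lambda_{m+j}$, so $\zeta$ is increasing with $\zeta(x)\leq x$. It is also bounded on $L^2$, because $\zeta'$ is bounded above and below by positive constants. One point to verify is that the computation, formal for smooth data, transfers to broad solutions. I would do this by density of smooth data together with continuity of the solution map, or directly along characteristics.

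\emph{Step 2 (the inverse and the group property).} The key identity is $\zeta_{m+k,m+l}\circ\zeta_{m+j,m+k}=\zeta_{m+j,m+l}$. Both sides solve the same ODE in the variable $s$ with value $0$ at $s=0$; the left side does so because its derivative is $\frac{\lambda_{m+l}(\cdot)}{\lambda_{m+k}(\cdot)}\cdot\frac{\lambda_{m+k}(\cdot)}{\lambda_{m+j}(s)}$, and uniqueness concludes. It follows that applying the analogous transformation with $L^{-1}$ to $z$ returns
$$
\sum_l (L^{-1}L)_{jl}\,y_{m+l}(t,\zeta_{m+j,m+l}(x))=y_{m+j}(t,x).
$$
Likewise, applying the $(L^{-1},U^{-1})$ transformation to $(\Qcan,\Gcan_{+-})$ gives back $(Q,G_{+-})$.

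\emph{Step 3 (equivalence of \NC).} Suppose $(\Qcan,\Gcan_{+-})$ is \NC in time $T$, and fix $y^0$. Let $z^0$ be the transform of $y^0$, and take a control $v$ driving the $z$-system from $z^0$ to $0$. Set $u=Uv$ and let $y$ be the corresponding solution of $(Q,G_{+-})$. Applying the inverse transformation to the $z$-trajectory produces a solution of $(Q,G_{+-})$ with control $u$ and initial data $y^0$, by Step 2. By uniqueness of solutions it coincides with $y$, so $y(T)$ is the inverse transform of $z(T)=0$, that is $y(T)=0$. The converse implication follows symmetrically, exchanging the roles of the two systems via $(L^{-1},U^{-1})$.

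I expect the main obstacle to be the rigorous justification that the transformation preserves broad ($L^2$) solutions, including the trace $y_-(t,0)$ in the source term. The algebraic structure, namely the lower triangularity of $L$ ensuring $\zeta(x)\leq x$ and the composition law of the $\zeta$'s, seems straightforward by comparison.
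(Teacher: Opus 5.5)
\textbf{There is a genuine gap in Step 1, for the negative components.} Your treatment of the positive components is right. The reparametrizations $\zeta_{m+j,m+k}$, the bound $\zeta(x)\le x$ coming from the lower triangularity of $L$, and the composition law are exactly what is needed there, and they reproduce \eqref{def Gcan}.

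The gap is the claim that $z_-=U^{-1}y_-$ ``solves the same free transport equations''. Since $G_{--}=0$, you get
$\pt{z_-}+\Lambda_-\px{z_-}=(\Lambda_-U^{-1}-U^{-1}\Lambda_-)\px{y_-}$.
This is nonzero as soon as $U^{-1}$ has an off-diagonal entry, because $\lambda_1,\ldots,\lambda_m$ are distinct. It is the same obstruction you correctly spotted for $L$ on the positive side.

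A telling sign is that your argument never uses that $U$ is upper triangular, and without that hypothesis the conclusion is false. Take $m=p=2$, $G_{+-}=0$, $L=\Id$ and $U$ the column swap. Your argument would show that $Q=\begin{pmatrix}1&0\\0&0\end{pmatrix}$ and $QU=\begin{pmatrix}0&1\\0&0\end{pmatrix}$ have the same minimal null control time. By the $M=0$ formula recalled in the introduction, these times are $\max\{T_2,T_3+T_1\}$ and $T_3+T_2$, which always differ.

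\textbf{How to repair it.} Reparametrize the negative components as well. Set $z_i(t,x)=\sum_{j\ge i}(U^{-1})_{ij}\,y_j(t,\zeta_{ij}(x))$, where $\zeta_{ij}'(s)=\lambda_j(\zeta_{ij}(s))/\lambda_i(s)$ and $\zeta_{ij}(0)=0$. This is where upper triangularity enters: $U^{-1}$ is upper triangular and $|\lambda_j|\le|\lambda_i|$ for $j\ge i$, so $\zeta_{ij}(x)\le x$ and the map is well defined. Then:
\begin{itemize}
\item $z_-$ genuinely solves the free transport equations with speeds $\Lambda_-$.
\item $z_-(t,0)=U^{-1}y_-(t,0)$. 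This trace is the only thing entering the source term and the boundary condition at $x=0$, so the formula for $\Gcan_{+-}$ is unaffected.
\item The group property of your Step 2 carries over verbatim to the $\zeta_{ij}$.
\end{itemize}

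\textbf{The price is the control.} The new control is no longer $U^{-1}u$:
$z_i(t,1)=(U^{-1})_{ii}u_i(t)+\sum_{j>i}(U^{-1})_{ij}\,y_j(t,\zeta_{ij}(1))$, with $\zeta_{ij}(1)<1$.
The extra traces depend only on the initial data and on $u_j$ for $j>i$. So for fixed initial data, $u\mapsto z_-(\cdot,1)$ is a triangular bijection of $L^2(0,T)^m$: solve for $u_m$, then $u_{m-1}$, and so on. Step 3 must use this correspondence instead of $u=Uv$. With these changes, the rest of your argument (bijectivity of the spatial transform at fixed time, hence $z(T)=0$ if and only if $y(T)=0$) goes through.
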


This result is contained in the proof of \cite[Proposition 4.1]{HO22-JDE}.

\section{Design of boundary conditions for the kernel equations}
\label{sect: BC for K}

Using a second Volterra integral transformation, it has further been shown in \cite{HO22-JDE} that we can change any system $(\Qcan,G_{+-})$, with $\Qcan$ in canonical form and $G_{+-}$ arbitrary, into an equivalent system $(\Qcan,\tilde{G}_{+-})$, with $\tilde{G}_{+-}$ having the structure
$$
\tilde{g}_{m+j,c_\gamma}=0 \quad \text{ whenever } j \geq r_\gamma.
$$
This structure simply means that $\tilde{G}_{+-}$ has zeroes at and below its $(r_\gamma,c_\gamma)$ entries.
For instance, if the canonical form of $Q$ is
$$
\Qcan=
\begin{pmatrix}
1 & 0 \\
0 & 0 \\
0 & 1
\end{pmatrix},
$$
then this means that $\tilde{G}_{+-}$ has the form
$$
\tilde{G}_{+-}=
\begin{pmatrix}
0 & \times \\
0 & \times \\
0 & 0
\end{pmatrix}.
$$
We refer to \cite[Section 6]{HO22-JDE} for details.

Despite sufficient for our purpose (the proof of Theorem \ref{main thm}), we show here that we can actually find a special kernel $K$ such that $\Gcan_{+-}$ defined by \eqref{def Gcan}-\eqref{def G} already inherits such a structure.
This thus makes the use of an additional Volterra integral transformation unnecessary in our situation.
This result is new and important in itself.

\begin{theorem}\label{new thm kern}
There exists a solution $K$ to the kernel equations \eqref{kern equ} such that $\Gcan_{+-}$ defined by \eqref{def Gcan} and \eqref{def G} satisfies
\begin{equation}\label{cond Gtilde}
\gcan_{m+j,c_\gamma}=0 \quad \text{ whenever } j \geq r_\gamma.
\end{equation}
\end{theorem}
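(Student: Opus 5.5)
We exploit the freedom in the boundary data $\varphi_{ij}$ of Theorem \ref{thm kern}, together with its extension in Remark \ref{rmk wp kern}. Only the rows $i=m+j$ of $K$ with $j \geq 1$ matter for $G_{+-}$, and these rows are decoupled. So we fix a row $i=m+j$ and choose its boundary conditions on $\xi=0$ to produce the zeros required in \eqref{cond Gtilde}.

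First we compute. By \eqref{def G}, $G_{m+j}(x)=K_{m+j}(x,0)P$, where $P=-\Lambda(0)\begin{pmatrix}\Id_m \\ Q\end{pmatrix}$. Hence
$$
g_{m+j,\beta}(x)=-\sum_{\alpha \leq m} k_{m+j,\alpha}(x,0)\lambda_\alpha(0)\delta_{\alpha\beta}-\sum_{\ell=1}^p k_{m+j,m+\ell}(x,0)\lambda_{m+\ell}(0)q_{\ell\beta}.
$$
Inserting this into \eqref{def Gcan}, $\gcan_{m+j,c_\gamma}(x)$ becomes a linear combination of the traces $k_{m+k,\alpha}(\zeta_{m+j,m+k}(x),0)$ for $k\leq j$, with coefficients built from $L$, $U$, $Q$ and $\Lambda(0)$. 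Since $LQU=\Qcan$, we can write $QU=L^{-1}\Qcan$, so the column $c_\gamma$ of $QU$ is $L^{-1}e_{r_\gamma}$. Because $L$ is unit lower triangular, this column is supported on rows $\geq r_\gamma$.

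We then proceed by induction on $j$. Suppose rows $m+1,\ldots,m+j-1$ have already been constructed. Row $m+j$ enters $\gcan_{m+j,\cdot}$ with coefficient $l_{jj}=1$ and argument $\zeta_{m+j,m+j}(x)=x$. For $j\ge r_\gamma$ we impose on row $m+j$ the condition
$$
\sum_{\alpha} k_{m+j,\alpha}(x,0)\,(PU)_{\alpha c_\gamma}=-\sum_{k<j} l_{jk}\,G_{m+k}(\zeta_{m+j,m+k}(x))U e_{c_\gamma}=:h_{j\gamma}(x),
$$
where $h_{j\gamma}$ is known, since the earlier rows are already fixed. This is exactly $\gcan_{m+j,c_\gamma}=0$. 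The number of such conditions equals the number of $\gamma$ with $r_\gamma\le j$.

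These conditions have to be matched with the boundary conditions of Theorem \ref{thm kern} on $\xi=0$ for row $i=m+j$, namely those for $k_{m+j,m+\ell}$ with $\ell \le j$. The $(m+\ell)$-th entry of $PU e_{c_\gamma}$ is $-\lambda_{m+\ell}(0)(L^{-1})_{\ell r_\gamma}$, which is nonzero for $\ell=r_\gamma$ and vanishes for $\ell<r_\gamma$. So for each $\gamma$ with $r_\gamma\le j$, we solve the condition for $k_{m+j,m+r_\gamma}(x,0)$ in terms of $h_{j\gamma}$ and the remaining traces $k_{m+j,\alpha}(x,0)$. This gives a boundary condition of the form in Remark \ref{rmk wp kern}. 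The remaining indices $\ell\le j$ with $\ell\notin\{r_\gamma\}$ keep arbitrary data $\varphi$ (for instance zero), and all other boundary data are chosen as in Theorem \ref{thm kern}.

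The main obstacle is the well-posedness check in Remark \ref{rmk wp kern}, namely the invertibility of $\Id-B$. Here the $b_{\alpha,m+r_\gamma}$ with $m+1\le\alpha\le m+j$ are ratios $-(L^{-1})_{\alpha-m,r_\gamma}/(L^{-1})_{r_\gamma r_\gamma}$ times speed ratios, and they vanish for $\alpha<m+r_\gamma$ and for $\alpha=m+r_\gamma$. After ordering the indices, $B$ should therefore be strictly triangular, so $\Id-B$ is unit triangular and hence invertible. I would verify this triangularity carefully, because indices $\alpha>m+r_\gamma$ can themselves be pivots $r_{\gamma'}$. Because the $r_\gamma$ are distinct, it suffices to solve the equations in decreasing order of $r_\gamma$, so that each pivot trace is expressed through traces with larger index or through the non-pivot traces.

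The induction then yields a kernel $K\in L^\infty(\Tau)$ that solves \eqref{kern equ} with the off-diagonal data $f_{ij}$ determined by $M^0$. By construction this kernel satisfies \eqref{cond Gtilde}.
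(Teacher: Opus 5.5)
Your proposal is correct and follows essentially the same route as the paper. You induct on the row index $j$, isolate the $k=j$ term of \eqref{def Gcan} (where $l_{jj}=1$ and $\zeta_{m+j,m+j}(x)=x$), use $QU=L^{-1}\Qcan$ to get a pivot at $\ell=r_\gamma$ with support on $\ell\geq r_\gamma$, and solve for $k_{m+j,m+r_\gamma}(x,0)$ as a boundary condition of the type in Remark \ref{rmk wp kern}. Your remaining worry about triangularity is unfounded: $b_{\alpha,m+r_\gamma}\neq 0$ only when $\alpha>m+r_\gamma$, and non-pivot columns vanish, so the matrix is strictly triangular whether or not such $\alpha$ are themselves pivots; this is exactly the paper's remark that $B$ is strictly upper triangular.
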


\begin{proof}
Recall that the rows of a solution $K$ to the kernel equations are decoupled.
We can see \eqref{cond Gtilde} as a condition on $\Gcan_{m+j}$, the $j$-th row of $\Gcan_{+-}$.
Since $\Gcan_{m+j}$ depends only on the $(m+j)$-th row of $K$, we do not need to discuss about the first $m$ rows of $K$ (in other words, we can take any solution for those rows).
We are now going to show how to define appropriate boundary conditions for the $(m+j)$-th row of $K$, by induction on $j$ and starting with $j=1$.
\begin{enumerate}
\item
First of all, if $r_1>1$, then for the $(m+j)$-th rows of $K$ with $j<r_1$, we fix arbitrary boundary conditions (whenever needed and appropriately, at $x=0$ or $\xi=1$, see Remark \ref{rmk wp kern}).
Then, starting from the $r_1$-th row onward, we proceed as follows.

\item
Let us introduce $G_{+-}^U=G_{+-}U$.
We first write what condition \eqref{cond Gtilde} means for $G_{+-}^U$.
By definition of $\Gcan_{+-}$ (see \eqref{def Gcan}), we obviously have
$$
\Gcan_{m+j}(x)=\sum_{k \leq j} l_{jk}G_{m+k}^U(\zeta_{m+j,m+k}(x)).
$$
Separating the term for $k=j$ from the rest and owing to the facts that $l_{jj}=1$ and $\zeta_{m+j,m+j}(x)=x$, condition \eqref{cond Gtilde} can thus be written as
\begin{equation}\label{cond Gtilde 2}
g_{m+j,c_\gamma}^U(x)=\varphi_{m+j,c_\gamma}(x) \quad \text{ whenever } j \geq r_\gamma,
\end{equation}
where we introduced $\varphi_{m+j,c_\gamma}(x)= -\sum_{k<j} l_{jk}g_{m+k,c_\gamma}^U(\zeta_{m+j,m+k}(x))$.
It is crucial to observe that $\varphi_{m+j,c_\gamma}$ depends only on rows of $G_{+-}^U$ before $j$, and thus only on rows of $\begin{pmatrix} K_{+-} & K_{++} \end{pmatrix}$ before $j$.
This is what will make the induction work.

\item
By definition of $G_{+-}$ and since $LQU=\Qcan$, we have
$$
G_{+-}^U(x)=-K_{+-}(x,0)\Lambda_-(0)U
-K_{++}(x,0)\Lambda_+(0)L^{-1} \Qcan.
$$
Let us write it as
$$G_{+-}^U(x)=F(x) +H(x) \Qcan,$$
where
$$F(x)=-K_{+-}(x,0)\Lambda_-(0)U, \quad H(x)=-K_{++}(x,0)\Lambda_+(0)L^{-1}.
$$
To follow our previous notations, the $(\alpha,\beta)$ entry of $F$ (resp. $H$) will be denoted by $f_{m+\alpha,\beta}$ (resp. $h_{m+\alpha,m+\beta}$).
Then, condition \eqref{cond Gtilde 2} becomes a condition on $H$ as follows:
\begin{equation}\label{cond Gtilde 3}
h_{m+j,m+r_\gamma}(x)=
-f_{m+j,c_\gamma}(x)
+\varphi_{m+j,c_\gamma}(x) \quad \text{ whenever } j \geq r_\gamma.
\end{equation}

\item
Let us now write condition \eqref{cond Gtilde 3} as a condition on $K_{++}$.
According to Theorem \ref{thm kern}, the values of $K_{++}(x,0)$ that can be chosen are exactly $k_{m+j,m+\beta}(x,0)$ with $j \geq \beta$.
By definition of $H$ and since $l^{r_\gamma r_\gamma}=1$, we have
$$
h_{m+j,m+r_\gamma}(x)=
-k_{m+j,m+r_\gamma}(x,0) \lambda_{m+r_\gamma}(0)
-\sum_{\beta>r_\gamma} k_{m+j,m+\beta}(x,0) \lambda_{m+\beta}(0) l^{\beta r_\gamma}.
$$
Let now $j \geq r_1$ be arbitrary.
Let $N \geq 1$ be the largest integer such that $r_N \leq j$.
First, for each $\beta \notin \ens{r_1,\ldots,r_N}$, we fix an arbitrary boundary condition for $k_{m+j,m+\beta}$ (at $\xi=0$ or $x=1$).
Then, we solve
\begin{multline*}
k_{m+j,m+r_\gamma}(x,0)
=
\frac{1}{\lambda_{m+r_\gamma}(0)} \Bigg(
-\sum_{\beta>r_\gamma} k_{m+j,m+\beta}(x,0) \lambda_{m+\beta}(0) l^{\beta r_\gamma}
\\
-(-f_{m+j,c_\gamma}(x)
+\varphi_{m+j,c_\gamma}(x))
\Bigg),
\end{multline*}
for all $\gamma \in \ens{1,\ldots,N}$.
This is possible since, on the left-hand side we have $j \geq r_\gamma$ and these boundary conditions are of the form stated in Remark \ref{rmk wp kern}, with corresponding matrix $B$ strictly upper triangular.

\end{enumerate}

\end{proof}

\section{Proof of the main result}
\label{sect proof main thm}

We have now all the ingredients necessary to prove our main result.

\begin{proof}[Proof of Theorem \ref{main thm}]
\begin{enumerate}
\item
By assumption of the theorem, the canonical form of $Q$ is
\begin{equation}\label{def Qcan}
\Qcan=
\left(\begin{array}{c|c}
J & 0 \\
\hline
0 & e
\end{array}\right),
\end{equation}
where:
\begin{itemize}
\item
$J \in \R^{(m-1)\times(m-1)}$ is invertible,

\item
$e \in \R^{p-(m-1)}$ is equal to the $r_m$-th canonical vector if $\rank Q=m$ and to zero otherwise.
\end{itemize}

We fix a kernel $K$ as in Theorem \ref{new thm kern}.
Then, the corresponding $\Gcan_{+-}$ has the form
\begin{equation}\label{def Ghat}
\Gcan_{+-}=
\left(\begin{array}{c|c}
A & b \\
\hline
0 & d
\end{array}\right),
\end{equation}
for some $A \in \R^{(m-1) \times (m-1)}$, $b \in \R^{m-1}$ and $d \in \R^{p-(m-1)}$.
From the discussion in Section \ref{sect prelim}, we have to show that the minimal time for the \NC of system $(\Qcan, \Gcan_{+-})$ is equal to the quantity claimed in the statement of Theorem \ref{main thm}.
For convenience, let us write this system explicitly:
\begin{equation}\label{syst final}
\begin{dcases}
\pt{y_-}(t,x)+\Lambda_-(x) \px{y_-}(t,x)=0, \\
\pt{y_+}(t,x)+\Lambda_+(x) \px{y_+}(t,x)=\Gcan_{+-}(x) y_-(t,0), \\
y_-(t,1)=u(t), \quad y_+(t,0)=\Qcan y_-(t,0), \\
\IC,
\end{dcases}
\quad (t,x) \in R_T,
\end{equation}
where $\Lambda_-=\diag(\lambda_1,\ldots,\lambda_m)$ and $\Lambda_+=\diag(\lambda_{m+1},\ldots,\lambda_{m+p})$.

\item
Because of the structures \eqref{def Qcan} and \eqref{def Ghat} of $\Qcan$ and $\Gcan_{+-}$, we split system \eqref{syst final} into two parts by separating the first $m-1$ components of $y_-$ and $y_+$ from the remaining ones.
Thus, we write
$$
y_-=
\left(\begin{array}{c}
y_1 \\
\vdots \\
y_{m-1} \\
\hline
y_m
\end{array}\right)
=
\begin{pmatrix}
\yIIm \\
\yIm
\end{pmatrix},
\quad
y_+=
\left(\begin{array}{c}
y_{m+1} \\
\vdots \\
y_{2m-1} \\
\hline
y_{2m} \\
\vdots \\
y_{m+p}
\end{array}\right)
=
\begin{pmatrix}
\yIIp \\
\yIp
\end{pmatrix},
$$
with $\yIIm \in \R^{m-1}$, $\yIm \in \R$ and $\yIIp \in \R^{m-1}$, $\yIp \in \R^{p-(m-1)}$.
Similarly, we use the decomposition
$$
u=
\left(\begin{array}{c}
u_1 \\
\vdots \\
u_{m-1} \\
\hline
u_m
\end{array}\right)
=
\begin{pmatrix}
\uII \\
\uI
\end{pmatrix},
$$
with $\uII \in \R^{m-1}$ and $\uI \in \R$.
Finally, we also need to introduce
$$
\Lambda_-=\diag(\LambdaIIm,\LambdaIm), \quad \Lambda_+=\diag(\LambdaIIp,\LambdaIp),
$$
with $\LambdaIIm \in \R^{(m-1)\times(m-1)}$, $\LambdaIm \in \R$ and $\LambdaIIp \in \R^{(m-1)\times(m-1)}$, $\LambdaIp \in \R^{(p-(m-1))\times(p-(m-1))}$.
Then, system \eqref{syst final} takes the form
\begin{equation}\label{syst final decomp}
\begin{dcases}
\pt{\yIIm}(t,x)+\LambdaIIm(x) \px{\yIIm}(t,x)=0, \\
\pt{\yIm}(t,x)+\LambdaIm(x) \px{\yIm}(t,x)=0, \\
\pt{\yIIp}(t,x)+\LambdaIIp(x) \px{\yIIp}(t,x)=A(x) \yIIm(t,0) + b(x) \yIm(t,0), \\
\pt{\yIp}(t,x)+\LambdaIp(x) \px{\yIp}(t,x)=d(x) \yIm(t,0), \\
\yIIm(t,1)=\uII(t), \quad \yIIp(t,0)=J\yIIm(t,0), \\
\yIm(t,1)=\uI(t), \quad \yIp(t,0)=e\yIm(t,0), \\
\IC,
\end{dcases}
\quad (t,x) \in R_T.
\end{equation}
We see that this system can be divided into two subsystems.
However, the order is important.

\item
First, we control ``subsystem I'':
\begin{equation}\label{subsyst I}
\begin{dcases}
\pt{\yIm}(t,x)+\LambdaIm(x) \px{\yIm}(t,x)=0, \\
\pt{\yIp}(t,x)+\LambdaIp(x) \px{\yIp}(t,x)=d(x) \yIm(t,0), \\
\yIm(t,1)=\uI(t), \quad \yIp(t,0)=e\yIm(t,0), \\
\IC,
\end{dcases}
\quad (t,x) \in R_T.
\end{equation}
By Theorem \ref{thm 1+p}, we know that the minimal control time of this ``$1+\times$'' subsystem is
$$
\TI=\max\ens{T_{2m}, \quad \max_{\substack{m \leq k \leq p \\ k<r_m}}  T_{m+k}^{I_{m+k}}+T_m, \quad T_{m+r_m}+T_m},
$$
with $I_{m+k}=(\Xm{1}{d_k},1)$.
Once a control $\uI$ is fixed so that $\yIm(T,\cdot)=\yIp(T,\cdot)=0$, the states $\yIm$ and $\yIp$ become known quantities.

\item
Denoting by $f(t,x)=b(x) \yIm(t,0)$, we now control ``subsystem II'':
$$
\begin{dcases}
\pt{\yIIm}(t,x)+\LambdaIIm(x) \px{\yIIm}(t,x)=0, \\
\pt{\yIIp}(t,x)+\LambdaIIp(x) \px{\yIIp}(t,x)=A(x) \yIIm(t,0) + f(t,x), \\
\yIIm(t,1)=\uII(t), \quad \yIIp(t,0)=J\yIIm(t,0), \\
\IC,
\end{dcases}
\quad (t,x) \in R_T.
$$
This is a system with invertible boundary coupling matrix.
Therefore, from Theorem \ref{thm HO21-JMPA syst G}, this system with $f=0$ is \EC in any time $T>\TII$, where
$$\TII=\max_{1 \leq k \leq m-1} T_{m+k}+T_{c_k},$$
(obviously, the indices $c_k$ of $J$ and $Q$ are the same).
Since we are dealing with \EC, the same remains true for arbitrary $f$.
In summary, we have shown that system \eqref{syst final decomp} is \NC in any time $T>\max\ens{\TI,\TII}$.
Clearly, $\max\ens{\TI,\TII}=\max\ens{\tau^1, \tau^2}$, with $\tau^1, \tau^2$ given as in the statement of the theorem.

\item
Let us now explain why system \eqref{syst final decomp} cannot be \NC before $\max\ens{\TI,\TII}$.

\begin{itemize}
\item
First, it is clear $T \geq \TI$ is necessary since, if system \eqref{syst final decomp} is \NC in time $T$, then so is its subsystem \eqref{subsyst I} and, as mentioned above, $\TI$ is the minimal control time for this subsystem.

\item
To see why $T \geq \TII$ is also necessary we recall that, as mentioned in the introduction (see also \cite[Theorem 5.1 (i)]{HO22-JDE}), the minimal control time cannot be smaller than the minimal control time of the same system without internal coupling, which is exactly $\max\ens{\TII, T_{m+r_m}+T_m}$ for system \eqref{syst final decomp}.
\end{itemize}
The proof of Theorem \ref{main thm} is complete.

\end{enumerate}
\end{proof}

\begin{remark}
Let us mention a particular structure, for the initial system \eqref{syst}, where the proof above is simplified (yet, the main ideas are the same).
It is when $Q$ satisfies the assumptions of the theorem, $\rank Q=m-1$, and
$$
M=
\begin{pmatrix}
\times & \times \\
0 & \times
\end{pmatrix},
$$
where the zero block is of size $p \times m$ (that is, there is no internal coupling term for $y_-$ in the equations of $y_+$).
In that case, we have $\Tinf=\max\ens{\tau^1, T_m}$ and the proof is simplified as follows.
First, the structure of $M$ implies that we can find a solution $K$ to the kernel equations with the same structure.
Together with the assumptions on $Q$, this implies that the last column of $\Gcan_{+-}$ is zero.
Thus, in the proof above, subsystems I and II and decoupled.
Moreover, since $e=d=0$, we do not even have to invoke Theorem \ref{thm 1+p} to see that the minimal control time for subsystem I is just $\max\ens{T_{2m}, T_m}$.
\end{remark}

\section{Example}
\label{sect example}

In this last part, we use the system of Example \ref{ex} to show how the intervals $I_{m+k}$ of our main result can sometimes be explicitly characterized in function of the parameters $\Lambda,M,Q$ of the system.
Let us first recall the framework.
We consider system \eqref{syst} with parameters
$$
Q=
\begin{pmatrix}
1 & 0 \\
0 & 0 
\end{pmatrix},
\quad
M=
\left(\begin{array}{cc|cc}
0 & 0 & \times & \times \\
\times & 0 & \times & \times \\
\hline
\times & 0 & 0 & \times \\
\times & f & \times & 0 
\end{array}\right),
$$
where $f \in L^{\infty}(0,1)$ is arbitrary.
We are thus in the case $m=p=2$, the matrix $Q$ is already in canonical form and, clearly, $(r_1,c_1)=(1,1)$.
From Theorem \ref{main thm}, the minimal time for the \NC of the corresponding system is equal to
$$
\Tinf=\max\ens{T_3+T_1, \, T_4^{I_4}+T_2},
$$
where, from the proof of the theorem, $I_4$ is the interval $(\Xm{1}{\gcan_{42}},1)$.

We now characterize this interval in function of the parameters $\Lambda,M,Q$ only.
Since $Q$ is already in canonical form, we have $\Gcan_{+-}=G_{+-}$ (see Proposition \ref{prop LCU decomp}).
By definition of $G$ (see \eqref{def G}) and since the second column of $Q$ is zero, we have
$$g_{42}(x)= -k_{42}(x,0) \lambda_2(0).$$
Using that all but the last entry of the second column of $M$ are also zeros, we see that $k_{42}$ satisfies the following equation (see \eqref{kern equ}, note that $M^0=M$ since the diagonal of $M$ is already full of zeros):
$$
\begin{dcases}
\lambda_4(x)\px{k_{42}}(x,\xi)
+\pxi{k_{42}}(x,\xi)\lambda_2(\xi)
+k_{42}(x,\xi) \lambda_2'(\xi)
+k_{44}(x,\xi) f(\xi)
=0,
\\
k_{42}(x,x) =\frac{f(x)}{\lambda_4(x) -\lambda_2(x)},
\end{dcases}
\quad (x,\xi) \in \Tau.
$$
It remains to link intervals of the form $(0,\times)$ where $k_{42}(\cdot,0)$ and $f$ vanish.
This was done in \cite[Section 4]{HO21-COCV} and the following identity was derived:
$$
T_2+T_4^{(\Xm{1}{k_{42}(\cdot,0)}, 1)}=
T_2^{(\Xm{x_0}{f}, 1)}
+T_4^{(\Xm{x_0}{f}, 1)},
$$
where $x_0 \in (0,1)$ is the unique solution to $T_4^{(0,x_0)}+T_2^{(0,x_0)}=T_4$.

\section*{Acknowledgments}

The second author would like to thank Shandong University for its hospitality.
This work was initiated while he was visiting there.
This project was supported by National Natural Science Foundation of China (Nos. 12526214, 12521001, 12471421) and National Science Centre, Poland UMO-2023/50/E/ST1/00081.

\renewcommand*{\bibfont}{\small}
\printbibliography


\end{document}